\newtheorem{theorem}{Theorem}
\newtheorem{lemma}{Lemma}
\newtheorem{corollary}{Corollary}
\newtheorem{conjecture}{Conjecture}
\newtheorem{remark}{Remark}
\newenvironment{proof}{\begin{trivlist} \item[\hskip\labelsep{\it Proof.}]}{$\hfill\Box$\end{trivlist}}
\newcommand{\rd}{\,\mathrm{d}}
\newcommand{\bsa}{\boldsymbol{a}}
\newcommand{\bsone}{\boldsymbol{1}}
\newcommand{\RR}{\mathbb{R}}
\newcommand{\NN}{\mathbb{N}}
\newcommand{\ZZ}{\mathbb{Z}}
\newcommand{\calP}{\mathcal{P}}
\title{Lower bounds on the $L_p$ discrepancy of digital NUT sequences}
\author{Ralph Kritzinger and Friedrich Pillichshammer\thanks{The authors are supported by the Austrian Science Fund (FWF): Project F5509-N26, which is a part of the Special Research Program "Quasi-Monte Carlo Methods: Theory and Applications".}}
\date{}
\begin{document}

\maketitle

\abstract{We study the $L_p$ discrepancy of digital NUT sequences which are an important sub-class of digital $(0,1)$-sequences in the sense of Niederreiter. The main result is a lower bound for certain sub-classes of digital NUT sequences. }

\centerline{\begin{minipage}[hc]{130mm}{
{\em Keywords:} $L_p$-discrepancy, van der Corput sequence, digital $(0,1)$ sequence\\
{\em MSC 2000:} 11K38, 11K31}
\end{minipage}}

 \allowdisplaybreaks

\section{Introduction}

For a set $\calP=\{x_0,\dots,x_{N-1}\}$ of $N$ points in $[0,1)$ the (non-normalized) {\it $L_p$ discrepancy} for $p\in[1,\infty]$ is defined as
$$L_p(\calP)=\|\Delta_{\calP}\|_{L_p([0,1])}=\left(\int_0^1 |\Delta_{\calP}(t)|^p \rd t\right)^{\frac{1}{p}}$$
(with the usual modification if $p=\infty$), where $$ \Delta_{\calP}(t)=\sum_{n=0}^{N-1}\bsone_{[0,t)}(x_n)-Nt \ \ \ \mbox{for $t \in [0,1]$} $$ 
is the (non-normalized) {\it discrepancy function} of $\calP$. 

We denote by $\NN$ the set of positive integers and define $\NN_0=\NN \cup \{0\}$. Let $X=(x_n)_{n \ge 0}$ be an infinite sequence in $[0,1)$ and, for $N \in \NN$, let $X_N=\{x_0,x_1,\ldots,x_{N-1}\}$ denote the set consisting of the first $N$ elements of $X$. It is well known that for all $p\in [1,\infty)$ we have $$L_p(X_N)\gtrsim \sqrt{\log N} \ \ \ \mbox{$\infty$-often}$$ and $$L_{\infty}(X_N)\gtrsim \log N \ \ \ \mbox{$\infty$-often.}$$ (For functions $f,g:\NN \rightarrow \RR^+$, we write $g(N) \lesssim f(N)$ or $g(N) \gtrsim f(N)$, if there exists a positive constant $C$ that is independent of $N$ such that $g(N) \le C f(N)$ or $g(N) \geq C f(N)$, respectively.) The lower estimate for finite $p$ was first shown by  Pro{\u\i}nov~\cite{pro86} (see also \cite{DP14a}) based on famous results of Roth~\cite{Roth2} and
Schmidt~\cite{schX} for finite point sets in dimension two. Using the method of  Pro{\u\i}nov in conjunction with a result of Hal\'{a}sz~\cite{hala} for finite point sets in dimension two the lower bound follows also for the $L_1$-discrepancy. The estimate for $p=\infty$ was first shown by Schmidt~\cite{Schm72distrib} in 1972 (see also \cite{be1982,lar2014,wagn}).\\

In this paper we investigate the $L_p$ discrepancy of {\it digital $(0,1)$-sequences}. Since we only deal with digital sequences over $\ZZ_2$ and in dimension 1 we restrict the necessary definitions to this case. For the general setting we refer to \cite{DP10,Nied87,Nied92}.

Let $\ZZ_2$ be the finite field of order 2, which we identify with the set $\{0,1\}$ equipped with arithmetic operations modulo 2. For the generation of a digital $(0,1)$ sequence $(x_n)_{n \ge 0}$ over $\ZZ_2$ we require an infinite matrix $C=(c_{i,j})_{i,j \ge 1}$ over $\ZZ_2$ with the following property\footnote{A further technical condition which is sometimes required, see \cite[p.72, (S6)]{Nied92}, is that for each $j \ge 1$ the sequence $(c_{i,j})_{i \ge 1}$ becomes eventually zero. Otherwise it could happen that one or more elements of the digital $(0,1)$-sequence are 1 and therefore do not belong to $[0,1)$.}: for every $n \in \NN$ the left upper $n \times n$ submatrix $(c_{i,j})_{i,j=1}^n$ has full rank. In order to construct the $n^{{\rm th}}$ element $x_n$ for $n \in \NN_0$ compute the base 2 expansion $n=n_0+n_1 2+n_2 2^2+\cdots$ (which is actually finite), set $\vec{n}:=(n_0,n_1,n_2,\ldots)^{\top} \in \ZZ_2^{\infty}$ and compute the matrix vector product $$C \vec{n}=:(y_1^{(n)},y_2^{(n)},y_3^{(n)},\ldots)^{\top} \in \ZZ_2^{\infty}$$ over $\ZZ_2$. Finally, set  $$x_n:=\frac{y_1^{(n)}}{2}+\frac{y_2^{(n)}}{2^2}+\frac{y_3^{(n)}}{2^3}+\cdots.$$
We denote the digital $(0,1)$-sequence\footnote{In the general notation, the $1$ refers to the dimension and the $0$ refers to the full rank condition of the generator matrix $C$.} constructed in this way by $X^C$.\\

An important sub-class of digital $(0,1)$-sequences which is studied in many papers (initiated by Faure~\cite{Fau05}) are so-called {\it digital NUT sequences} whose generator matrices are of non-singular upper triangular (NUT) form
\begin{equation} \label{matrixb} C=
\begin{pmatrix}
1 & c_{1,2} & c_{1,3}  &   \cdots\\
0 & 1 & c_{2,3}   &  \cdots\\
0 & 0 & 1   & \cdots\\ 
 \vdots & \vdots & \vdots & \ddots
\end{pmatrix}.
\end{equation}

For example, if $C=I$ is the identity matrix, then the corresponding digital NUT sequence is the {\it van der Corput sequence} in base 2. For information about digital NUT sequences and the van der Corput sequence see the survey \cite{FKP15} and the references therein.\\

For digital NUT sequences $X^C$ it is known (see \cite[Theorem~1]{DLP}), that $$L_2(X_N^C) \le L_2(X_N^I) \le \left(\left(\frac{\log N}{6 \log 2}\right)^2 + O(\log N) \right)^{1/2}$$ and for general $p \ge 1$ it is known (see \cite[Theorem~2]{pil04}) that 
\begin{equation}\label{uglkettedisc}
L_p(X_N^C) \le L_{\infty}(X_N^C) \le L_{\infty}(X_N^I) \le \frac{\log N}{3 \log 2}+1.
\end{equation}
Note that according to the lower bound of Schmidt the upper bound for the $L_{\infty}$ discrepancy in \eqref{uglkettedisc} is optimal in the order of magnitude in $N$. This is not the case for finite~$p$.

Concerning lower bounds on the $L_p$ discrepancy of digital NUT sequences very few is known and only for very special cases. For the van der Corput sequence we have for all $p \in [1,\infty)$ 
\begin{equation}\label{lpvdclsup}
\limsup_{N \rightarrow \infty} \frac{L_p(X_N^I)}{\log N}=\frac{1}{6 \log 2}
\end{equation}
and hence $L_p(X_N^I) \gtrsim \log N$ for infinitely many $N \in \NN$; see \cite[Corollary~1]{pil04}. 

For the so-called {\it upper-$\bsone$-sequence} $X^U$, which is generated by the matrix 
\begin{equation} \label{Umatrix} U=
\begin{pmatrix}
1 & 1 & 1  &   \cdots\\
0 & 1 & 1  &   \cdots\\
0 & 0 & 1  &  \cdots\\ 
\vdots & \vdots & \vdots & \ddots
\end{pmatrix}
\end{equation}
it is known that for every $p\ge 1$ we have $L_p(X_N^U) \ge \frac{\log N}{20 \log 2}+O(1)$ infinitely often; see \cite{DLP}.\\

In \cite{DLP} the authors study the $L_p$ discrepancy of $X^C$ for special types of NUT matrices $C$ of the form
\begin{eqnarray}\label{c2}
C=\left(
\begin{array}{ccc}
\bsa_1 &           &            \\
0         & \bsa_2 &            \\
0         & 0         & \bsa_3  \\
\multicolumn{3}{c}\dotfill
\end{array}\right)
\end{eqnarray}
with $$\bsa_i=(1,0,0,\ldots) \;\;\;\mbox{ or }  
\;\;\;\bsa_i=(1,1,1,\ldots)\;\;\;\mbox{ for } i \in \NN.$$ Note that these NUT sequences comprise the van der Corput sequence and the upper-$\bsone$-sequence $X^U$ as special cases. For $m \in \NN$ let $h(m)$ denote the number of $(1,0,0,\ldots)$ rows among the first $m$ rows of $C$. For example, $h(m)=m$ in case of the van der Corput sequence and $h(m)=0$ in case of the sequence $X^U$. Then it follows from \cite[Lemma~4]{DLP} that for every $m \in \NN$ there exists an integer $N \in [2^m,2^{m+1})$ such that $L_1(X_N^C) \gtrsim (m+h(m)^2)^{1/2}$. This implies that if $h(m) \gtrsim m$ we have for every $p \ge 1$ $$L_p(X_N^C) \gtrsim \log N \ \ \ \mbox{$\infty$-often.}$$

In general, however, it is a very difficult task to give precise lower bounds on the $L_p$ discrepancy of digital NUT sequences. We strongly conjecture the following:

\begin{conjecture}\label{con1}
For every digital NUT sequence $X^C$ we have 
\begin{equation}\label{eqcon1}
L_p(X_N^C) \gtrsim \log N \ \ \ \mbox{$\infty$-often.}
\end{equation} 
\end{conjecture}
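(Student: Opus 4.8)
\bigskip
\noindent\textbf{A possible route to Conjecture~\ref{con1}.}
First, since Lebesgue measure on $[0,1]$ is a probability measure, Jensen's inequality gives $L_1(X_N^C)\le L_p(X_N^C)$ for every $p\in[1,\infty)$, while the case $p=\infty$ of \eqref{eqcon1} is already contained in Schmidt's lower bound; so it is enough to treat $p=1$. The engine of the argument should be an exact formula for $\Delta_{X_N^C}$ coming from the binary-block decomposition of $N$. Write $N=2^{a_1}+\cdots+2^{a_r}$ with $a_1>\cdots>a_r\ge0$, so that $r$ is the binary digit sum of $N$, and split $X_N^C$ into the $r$ consecutive blocks of sizes $2^{a_1},\dots,2^{a_r}$. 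Using that $C$ is NUT and that no carries occur when one adds the blocks, one checks that the $i$-th block, \emph{as a set}, equals $\{k2^{-a_i}+\delta_i:\,0\le k<2^{a_i}\}$: the regular grid of mesh $2^{-a_i}$ translated by a dyadic rational $\delta_i\in[0,2^{-a_i})$, namely the part below scale $2^{-a_i}$ of the vector $\bsz_i:=\bigoplus_{l<i}(\text{column }a_l{+}1\text{ of }C)$. Hence, for a.e.\ $t\in[0,1]$,
$$\Delta_{X_N^C}(t)=\sum_{i=1}^{r}\Big(\psi_{a_i}(t-\delta_i)-2^{a_i}\delta_i\Big),$$
where $\psi_a(t):=\lceil 2^a t\rceil-2^a t$ is the sawtooth of period $2^{-a}$, range $[0,1)$ and mean $\tfrac12$.

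Two consequences drive the plan. Integrating, $\int_0^1\Delta_{X_N^C}(t)\rd t=\tfrac r2-K_N$, where $K_N:=\sum_{i=1}^r 2^{a_i}\delta_i\in[0,r)$; thus $L_1(X_N^C)\ge|\tfrac r2-K_N|$. Moreover sawtooths at distinct scales are almost orthogonal --- the inner product of $\psi_a(\cdot-u)-\tfrac12$ and $\psi_b(\cdot-v)-\tfrac12$ is $O(2^{-|a-b|})$, uniformly in $u,v$ --- so $\big\|\sum_{i}(\psi_{a_i}(\cdot-\delta_i)-\tfrac12)\big\|_{L_2}\lesssim\sqrt r$ and hence $\big|\,L_1(X_N^C)-|\tfrac r2-K_N|\,\big|\lesssim\sqrt r$. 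Since $r\le\log_2 N+1$, this reduces Conjecture~\ref{con1} to the following: for every large $m$ there exists an $N\in[2^m,2^{m+1})$ with $|\tfrac r2-K_N|\gtrsim m$. As $K_N<r$, such an $N$ automatically has $r\asymp m$ (a positive density of binary ones), and what is being asked is that the mean within-interval shift $K_N/r$ stay bounded away from $\tfrac12$.

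The plan for building such an $N$ is to choose its binary expansion greedily from the top scale downwards: having fixed $a_1>\cdots>a_{i-1}$, pick $a_i$ so that the leading bit $(\bsz_i)_{a_i+1}=\bigoplus_{l<i}c_{a_i+1,\,a_l+1}$ --- an XOR of \emph{above-diagonal} entries of $C$ sitting in row $a_i{+}1$ at the already chosen columns --- equals $0$ (or, alternatively, equals $1$) for a positive fraction of the indices $i$, and then similarly pin down the next few binary digits of each $2^{a_i}\delta_i$; this pushes $K_N/r$ off $\tfrac12$ by a fixed amount. For $C=I$ this succeeds automatically once the ones are spaced at least two apart (one gets $2^{a_i}\delta_i\le\tfrac16$ for all $i$), which reproves the lower-bound content of \eqref{lpvdclsup}; for $C=U$, and for the matrices \eqref{c2} with $h(m)\gtrsim m$, the same phenomenon underlies the results recalled earlier. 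The step I expect to be the main obstacle --- and the reason the statement is only a conjecture --- is the general case: an adversarial NUT matrix can arrange its above-diagonal entries $c_{i,j}$ in a sufficiently ``pseudorandom'' way that no choice of $\asymp m$ columns keeps the parities $\bigoplus_{l<i}c_{a_i+1,a_l+1}$ biased, so the greedy construction may collapse. One would then have to resort either to a more global choice of $N$, or to a pigeonhole over a whole family of admissible $N$ showing that the \emph{average} of $|\tfrac r2-K_N|$ over the family is $\gtrsim m$; controlling that average --- an $\ell^1$-type quantity, insensitive to the second-moment bounds above --- is where I expect the real difficulty to lie.
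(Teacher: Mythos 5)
You are attempting to prove a statement that the paper itself only states as a conjecture: there is no proof of Conjecture~\ref{con1} in the paper, only partial results (Theorems~\ref{thmlbd} and~\ref{thmlbd2} and the cited results for $X^I$, $X^U$ and the matrices \eqref{c2}). So the honest benchmark is whether your outline closes the general case, and it does not --- as you yourself say. The machinery you set up is sound and is essentially identical to the paper's Lemma~\ref{le17}: your block decomposition of $X_N^C$ into shifted grids $\{k2^{-a_i}+\delta_i\}$, your identification of $\delta_i$ with the sub-$2^{-a_i}$ part of $\bigoplus_{l<i}(\hbox{column }a_l+1\hbox{ of }C)$, and your formula $\int_0^1\Delta_{X_N^C}=\frac r2-\sum_i 2^{a_i}\delta_i+O(1)$ all match the paper's $\sigma_{i,n_i+k}=\bigoplus_{l<i}c_{n_i+k,n_l+1}$ and its expression for $\sum_i 2^{n_i}\delta_i$. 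Your reduction via $L_p\ge L_1\ge|\int\Delta|$ is exactly inequality \eqref{lpbdint}, and the special cases where your greedy choice of digits succeeds (spaced ones for $C=I$, the matrices \eqref{c2}, the band and constant-column families) are precisely the cases the paper actually handles. The second-moment refinement $\bigl|L_1-|\tfrac r2-K_N|\bigr|\lesssim\sqrt r$ via near-orthogonality of sawtooths is a correct and slightly sharper observation than the paper needs, but it does not change what has to be proved.

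The genuine gap is the one you name: for an arbitrary NUT matrix there is no known way to select $\asymp m$ column indices $a_1>\cdots>a_r$ so that the parities $\bigoplus_{l<i}c_{a_i+1,a_l+1}$ (and the next few digits of each $2^{a_i}\delta_i$) stay biased, and hence no way to force $|\tfrac r2-K_N|\gtrsim m$. Two further cautions. First, your reduction is only a sufficient condition: a proof of the conjecture might have to exploit $\|\Delta_{X_N^C}\|_{L_1}$ beyond its mean, since conceivably an adversarial $C$ keeps $K_N$ within $O(\sqrt m)$ of $r/2$ for \emph{every} admissible $N$, in which case the $\pm O(\sqrt r)$ window in your second-moment estimate swallows the whole signal. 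Second, an averaging or pigeonhole argument over a family of $N$ runs into the $\ell^1$-versus-$\ell^2$ issue you flag: the average of $|\tfrac r2-K_N|$ is not controlled from below by variance computations, because cancellations in the XOR structure of $C$ can make the distribution of $K_N$ concentrate at $r/2$. Until one of these obstacles is overcome, the statement remains open, and your text should be presented as a programme rather than a proof.
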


Note that for every digital NUT sequence and for every $p\ge 1$ we have 
\begin{equation}\label{disckettes2}
L_p(X_N^C) \le L_{\infty}(X_N^C) \le L_{\infty}(X_N^I) \le s_2(N),
\end{equation}
where $s_2: \NN \rightarrow \NN$ denotes the binary sum-of-digits function which is defined as $s_2(N)=N_0+N_1+\cdots+N_m$ whenever $N$ has binary expansion $N=N_0+N_1 2 + \cdots+N_m 2^m$. The very last estimate in \eqref{disckettes2} follows from the proof of \cite[Theorem~3.5 in Chapter~2]{kuinie}. 

\begin{remark}
The result in \eqref{disckettes2} can be generalized and improved in the following sense:
For every $p\in [1,\infty]$ and for {\bf every} digital $(0,1)$-sequence $X^C$ we have
$$L_p(X_N^C)\le c_p s_2(N)\ \ \ \mbox{for all $N \in \NN$,}$$
where $$c_p=\left\{ 
\begin{array}{ll}
1/\sqrt{3}=0.5773\ldots & \mbox{ if $p\in [1,2]$,}\\
1 & \mbox{ if $p \in (2,\infty]$.} 
\end{array}
\right.$$ We omit the proof.
\end{remark}

The sum-of-digits function is very fluctuating. For example we have $s_2(2^m)=1$, but $s_2(2^m-1)=m$. In any case we have $s_2(N)\le \frac{\log N}{\log 2} +1$. 

\begin{remark}\rm
The inequalities in \eqref{disckettes2} shows that having only very few non-zero binary digits is a sufficient condition on $N \in \NN$ which guarantees that $X_N^C$ has very low $L_p$ discrepancy. For example we have $$L_p(X_{N}^C)\le 1 \ \ \ \mbox{for all $N$ of the form $N=2^m$}$$ or $$L_p(X_{N}^C)\lesssim \sqrt{\log N} \ \ \ \mbox{for all $N$ of the form $N=2^m+2^{\lfloor \sqrt{m}\rfloor -1} -1$}$$ or $$ L_p(X_N^C)\lesssim \log N \ \ \ \mbox{ for all $N \ge 2$.}$$ See Figure~\ref{fig_a} for a comparison for the van der Corput sequence.

However, the condition on $N$ of having very few non-zero binary digits is not a necessary one for low discrepancy.  For example, consider $N$ of the form $N=2^m-1$. Then we have $s_2(N)=m=\lfloor \frac{\log N}{\log 2}+1\rfloor$ but: since the discrepancy of $X_N^C$ and of $X_{N+1}^C$ differ at most by 1 and since $L_p(X_{N+1}^C)=L_p(X_{2^m}^C) \le 1$ we obtain $L_p(X_N^C) \lesssim 1$. Hence, while $s_2(N)$ is very large, the discrepancy $L_p(X_N^C)$ is low.  

But in any case: the only possible candidates of $N$ that satisfy \eqref{eqcon1} are required to have $s_2(N) \gtrsim \log N$. 
\end{remark}

\begin{figure}[ht!]
\begin{center}
\includegraphics[angle=0,width=120mm]{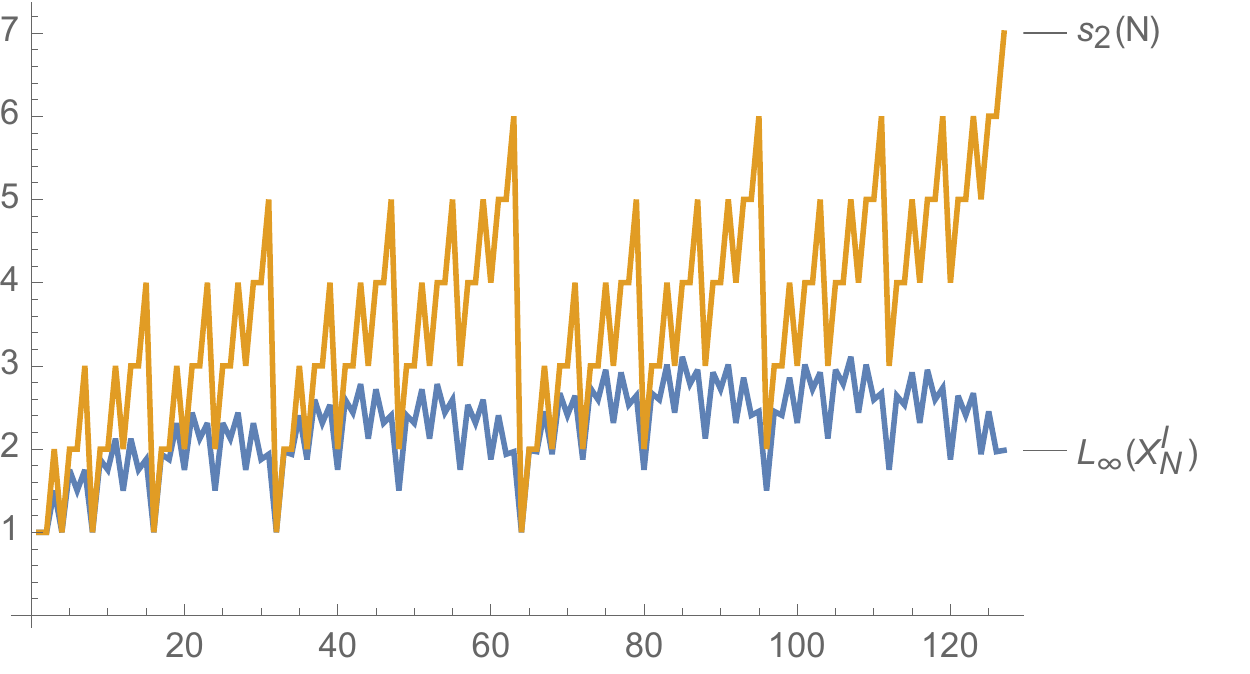}
\caption{The $L_{\infty}$ discrepancy of the van der Corput sequence $L_{\infty}(X_N^I)$ and the \newline binary sum-of-digits function $s_2(N)$ for $N=1,2,\ldots,127$.} \label{fig_a}
\end{center}
\end{figure}

In Section~\ref{secLB} we provide a lower bound for $L_p(X_N^C)$ for special types of NUT matrices.

\section{Lower bound on $L_p(X_N^C)$}\label{secLB}

We study two sub-classes of NUT matrices. The first class has a certain band structure. More detailled, the considered matrices are of the form $C(\alpha)=(c_{i,j})_{i,j \ge 1}$ where, for fixed $\alpha \in \NN$, $$c_{i,j}=\left\{ 
\begin{array}{ll}
1 & \mbox{if $i\le j < i+\alpha$,}\\
0 & \mbox{in all other cases.}
\end{array}\right.$$
For example, if $\alpha=1$, we obtain the identity matrix, i.e., $C(1)=I$. 

\begin{theorem}\label{thmlbd}
For all $\alpha \in \NN$ and $p\in [1,\infty]$ we have
$$L_p(X_N^{C(\alpha)}) \ge \frac{2^{\alpha-1}}{2^{2\alpha}-1}\frac{\log N}{2 \alpha \log 2}+O_{\alpha}(1) \ \ \ \mbox{$\infty$-often.}$$
The bound above is satisfied for $N$ of the form $$N=\sum_{\ell=1}^r 2^{2 \alpha (r-\ell)}=\frac{2^{2 \alpha r}-1}{2^{2 \alpha}-1}\ \ \ \mbox{for arbitrary $r \in \NN$.}$$
\end{theorem}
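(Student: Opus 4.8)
The plan is to exploit the digital structure of $X^{C(\alpha)}$ together with the classical Walsh-function / Haar-function analysis of the discrepancy function that underlies the results of \cite{pil04} and \cite{DLP}. The key point is that $C(\alpha)$ is a block-diagonal-like matrix: restricted to digits, multiplication by $C(\alpha)$ couples digit $j$ only to digits $j-\alpha+1,\ldots,j$, so the sequence $X^{C(\alpha)}$ behaves, on scales of size $2^{\alpha}$, like a van der Corput-type sequence in base $2^{\alpha}$. First I would compute $\Delta_{X_N^{C(\alpha)}}$ for the specific values $N=\sum_{\ell=1}^r 2^{2\alpha(r-\ell)}=(2^{2\alpha r}-1)/(2^{2\alpha}-1)$, which in base $2^{2\alpha}$ is the string $(1,1,\ldots,1)$ of length $r$ (the analogue of $N=2^m-1$ for the van der Corput sequence, but now in base $2^{2\alpha}$). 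For such $N$ one gets a clean telescoping/self-similar expression for the local discrepancy in terms of contributions from the $r$ relevant dyadic (or $2^{\alpha}$-adic) scales.

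The main computation is a lower bound for $\|\Delta_{X_N^{C(\alpha)}}\|_{L_1([0,1])}$, since $L_1 \le L_p \le L_\infty$ makes the $L_1$ bound the strongest and it then propagates to all $p\in[1,\infty]$. I would proceed as in the standard argument: expand $\Delta_{X_N^{C(\alpha)}}$ against a suitable orthonormal system (dyadic Haar functions, or Walsh functions adapted to base 2), isolate the "diagonal" contributions coming from the $r$ scales $2^{2\alpha(r-\ell)}$, $\ell=1,\ldots,r$, and show each contributes a constant of size $\asymp 2^{\alpha}/(2^{2\alpha}-1)$ with a consistent sign on a set of positive measure, so that they add up rather than cancel. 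Concretely, one writes $\Delta_{X_N^{C(\alpha)}}(t)=\sum_{\ell=1}^r \delta_\ell(t)$ where $\delta_\ell$ is the contribution of the $\ell$-th block, uses the band structure of $C(\alpha)$ to show $\delta_\ell$ is (up to lower-order terms) a fixed function rescaled to the $\ell$-th dyadic block, and integrates. Because $\log N = 2\alpha r\log 2 + O_\alpha(1)$, i.e.\ $r = \frac{\log N}{2\alpha\log 2}+O_\alpha(1)$, a per-block constant of $\frac{2^{\alpha-1}}{2^{2\alpha}-1}$ yields exactly the claimed bound $\frac{2^{\alpha-1}}{2^{2\alpha}-1}\,\frac{\log N}{2\alpha\log 2}+O_\alpha(1)$.

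The crux is computing the per-block constant correctly and checking the signs line up. The band structure means that within a block of $2\alpha$ consecutive digit-positions the generator acts like a fixed $2\alpha\times 2\alpha$ NUT matrix $M_\alpha$ over $\ZZ_2$; the local discrepancy contribution of that block is a fixed piecewise-linear function $\phi_\alpha$ on $[0,1)$ determined by $M_\alpha$, and one must evaluate $\int_0^1|\sum_{\ell=1}^r c^{r-\ell}\phi_\alpha(\cdot)|$-type quantities, where the geometric factor $2^{-2\alpha}$ per scale is what produces the $\frac{1}{2^{2\alpha}-1}$ in the answer and the factor $2^{\alpha-1}$ comes from the "half-block" offset (the exponent $2\alpha$ versus $\alpha$ in the definition of $N$). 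I expect the main obstacle to be the bookkeeping in this block computation: showing that the cross terms between different blocks are genuinely lower order (this is where the specific choice $N = \sum 2^{2\alpha(r-\ell)}$, with gaps of $2\alpha$ rather than $\alpha$, is used — consecutive active scales are far enough apart that their Haar/Walsh supports interact cleanly), and pinning down the constant $2^{\alpha-1}/(2^{2\alpha}-1)$ rather than just "some positive constant depending on $\alpha$". Once the block function $\phi_\alpha$ and its $L_1$-type norm against a geometric series are identified, the rest is routine; for $\alpha=1$ the computation should reduce to the known van der Corput value, giving $\frac{1}{6\log 2}$ after accounting for the different indexing, which serves as a sanity check.
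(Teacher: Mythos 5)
Your reduction to the $L_1$ case is right, and your choice of $N$ and the identity $r=\frac{\log N}{2\alpha\log 2}+O_\alpha(1)$ match the paper. But the core of your plan --- expand $\Delta_{X_N^{C(\alpha)}}$ in a Haar/Walsh system, isolate ``diagonal'' block contributions $\delta_\ell$, and argue they ``add up rather than cancel'' with consistent sign on a set of positive measure --- is left entirely unexecuted, and it is exactly the part that carries the content of the theorem. You say yourself that the crux is ``computing the per-block constant correctly and checking the signs line up,'' and you only \emph{expect} the cross terms to be lower order. As written this is a research plan, not a proof. Moreover, the mechanism you propose is mismatched to the conclusion: a sign-alignment argument on a set of positive measure $c<1$ would give $L_1\gtrsim c\cdot r\cdot(\text{const})$, degrading the constant, so to recover the precise value $\frac{2^{\alpha-1}}{2^{2\alpha}-1}$ you would need alignment on essentially all of $[0,1)$ --- at which point you are really lower-bounding $\bigl|\int_0^1\Delta(t)\,\mathrm{d}t\bigr|$, and the orthogonal expansion is superfluous. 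Orthogonality is also of no direct use in $L_1$ (one would need a Hal\'asz-type dual test function), so the route you sketch is harder than the problem requires.

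The paper's argument closes this gap with an elementary observation you never make explicit: $\|\Delta\|_{L_1}\ge\bigl|\int_0^1\Delta(t)\,\mathrm{d}t\bigr|$, and for a digital point set this integral is \emph{exactly} computable, since $\int_0^1\Delta_{\mathcal{P}}(t)\,\mathrm{d}t=\sum_{z\in\mathcal{P}}(\tfrac12-z)$. Decomposing $X_N^{C(\alpha)}$ into the $r$ blocks $\mathcal{P}_i$ of consecutive points, each $\mathcal{P}_i$ is a digitally shifted net $\{k/2^{n_i}+\delta_i\}$, so the whole integral reduces to computing the shift digits $\sigma_{r,j}$ by a finite matrix--vector product over $\ZZ_2$ (Lemma~\ref{le17}). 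For $C(\alpha)$ and $n_i=2\alpha(r-i)$ the band structure forces the $\sigma_{r,j}$ to alternate in runs of length $\alpha$ between $0$ and $1$; summing the resulting geometric series is what produces $r\cdot\frac{2^{\alpha-1}}{2^{2\alpha}-1}+O_\alpha(1)$. Your intuition that the factor $2^{\alpha-1}/(2^{2\alpha}-1)$ comes from a ``half-block offset'' against a $2^{-2\alpha}$ geometric decay is pointing at the right phenomenon, but without the exact-integral shortcut and the explicit computation of the $\sigma_{r,j}$ the constant --- and indeed the positivity of the main term --- is not established.
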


\begin{remark}\rm
\begin{enumerate}
\item Following all the details in the proof the constant hidden in $O_\alpha(1)$ can be computed exactly.
\item For $\alpha=1$ we have $C(1)=I$ and hence the resulting NUT sequence is the van der Corput sequence. Theorem~\ref{thmlbd} gives $$L_p(X_N^I) \ge \frac{\log N}{6 \log 2}+O(1) \ \ \ \mbox{$\infty$-often.}$$ This matches the corresponding value in \eqref{lpvdclsup}.
\end{enumerate}
\end{remark}

We also study NUT matrices which have the same entries in each column above the diagonal; i.e. we deal with matrices of the form 
\begin{equation} \label{Umatrix1} C(\bsa)=
\begin{pmatrix}
1 & a_1 & a_2  & a_3 &  \cdots\\
0 & 1 & a_2  & a_3 &  \cdots\\
0 & 0 & 1  & a_3 & \cdots\\ 
0 & 0 & 0  & 1 & \cdots \\
\vdots & \vdots & \vdots & \vdots & \ddots
\end{pmatrix},
\end{equation}
where $\bsa=(a_1,a_2,\dots)\in \ZZ_2^{\NN}$ is chosen arbitrarily. We set $l_0(m):=\#\{i\in\{1,\dots,m\}: a_i=0\}$ and $l_1(m):=\#\{i\in\{1,\dots,m\}: a_i=1\}$.
For $m\geq 2$ let further $d_0(m)$ be the minimal distance of consecutive zeroes and $d_1(m)$ be the minimal distance of consecutive ones in the string $(a_1,\dots,a_m)$, i.e. for $\ell\in\{0,1\}$ we define
$$ d_{\ell}(m):=\min_{1\leq n \leq m-1} \left\{\exists i\in\{1,\dots,m-n\}: a_{i}=a_{i+n}=\ell, a_{i+1}=\dots=a_{i+n-1}\neq \ell\right\}. $$
%and analogously for $d_1(m)$.
\begin{theorem}\label{thmlbd2}
  Let $m\geq 2$. For all $p\in [1,\infty]$ and for $N_{\bsa}=1+\sum_{i=1}^{m-1} 2^{i}(1-a_i)+2^m \ \ \ \mbox{for arbitrary $m \in \NN$}$ we have
\begin{equation} \label{claim1} L_p(X_{N_{\bsa}}^{C(\bsa)}) \ge \frac{1}{3}l_0(m)+O(1) \end{equation} 
if $d_0(m)\geq 2$, and
\begin{equation} \label{claim2} L_p(X_{N_{\bsa}}^{C(\bsa)}) \ge \frac{1}{3}l_1(m)+O(1) \end{equation} 
if $d_1(m)\geq 2$.
\end{theorem}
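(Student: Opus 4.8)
The plan is to compute the discrepancy function $\Delta_{X_{N_{\bsa}}^{C(\bsa)}}$ at a carefully chosen collection of points and to extract a lower bound on its $L_1$-norm (which, since $L_1 \le L_p$ for all $p$, suffices). First I would exploit the special column structure of $C(\bsa)$: because the $j$th column above the diagonal is the constant string $a_{j-1}$, the digital shift structure is essentially that of a van der Corput-type sequence with a twist governed by the partial sums of the $a_i$'s. The choice $N_{\bsa}=1+\sum_{i=1}^{m-1}2^i(1-a_i)+2^m$ is engineered so that the binary digits of $N_{\bsa}-1$ in positions $1,\dots,m-1$ are exactly $1-a_i$; this is the combinatorially natural "worst case" counting value, analogous to the role of $2^m-1$ for the van der Corput sequence. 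I would write $\Delta_{X_N^{C(\bsa)}}(t)$ as a sum over the binary digits of $N$ of local discrepancy contributions on dyadic intervals, using the standard decomposition of the discrepancy function of a digital sequence (as in the proofs leading to \eqref{uglkettedisc} and in \cite{DLP,pil04}).

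Next I would identify, for each index $i \le m$ with $a_i=0$ (in the case $d_0(m)\ge 2$), a dyadic subinterval $J_i \subseteq [0,1)$ on which the discrepancy function has a definite sign and magnitude bounded below by a constant (the $1/3$ reflecting the $\int_0^1 \psi$-type integral of the relevant sawtooth, exactly as the $1/(6\log 2)$ in \eqref{lpvdclsup} comes from $\int_0^1 (\{t\}-1/2)^2$-type quantities, here in $L_1$ giving $1/3$ up to normalisation). The hypothesis $d_0(m)\ge 2$ — that zeros among $a_1,\dots,a_m$ are never adjacent — is what guarantees these intervals $J_i$ are disjoint, or at least have bounded overlap multiplicity, so that the contributions do not cancel and instead add up: $\|\Delta\|_{L_1} \gtrsim \sum_{i:\,a_i=0} 1 = l_0(m)$. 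The symmetric hypothesis $d_1(m)\ge 2$ handles \eqref{claim2} by the same argument with the roles of $0$ and $1$ (and of "adding a point" versus "not adding a point") interchanged; alternatively one may invoke a reflection symmetry of digital $(0,1)$-sequences that swaps $\bsa \leftrightarrow \bsone - \bsa$ up to a measure-preserving transformation of $[0,1)$, reducing \eqref{claim2} to \eqref{claim1}.

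The main obstacle will be step two: making precise the claim that on each $J_i$ the local discrepancy is both sign-definite and bounded away from zero, and then bookkeeping the interaction between different indices $i$. The cancellation between dyadic levels is exactly what kills the naive bound for general $\bsa$ (and is why the conjecture is hard in full generality); the minimal-distance conditions $d_\ell(m)\ge 2$ are precisely the structural hypotheses that prevent destructive interference of consecutive contributions. Concretely I expect to need a lemma computing $\int_{J_i} \Delta_{X_{N_{\bsa}}^{C(\bsa)}}(t)\rd t$ exactly, showing it equals (up to the $O(1)$ error and a sign depending on $a_i$) a positive constant times the number of admissible indices, with the separation hypothesis ensuring the error terms telescope rather than accumulate linearly with the wrong sign. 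Once this local computation is in hand, summing over $i$ and applying $L_p \ge L_1$ finishes the proof; tracking the $O(1)$ uniformly in $m$ is routine but tedious.
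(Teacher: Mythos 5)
Your plan diverges substantially from the paper's proof, and the step you yourself flag as ``the main obstacle'' is a genuine gap that your outline gives no credible way to close. The paper never localises to subintervals $J_i$ and never needs sign-definiteness of $\Delta$ on any dyadic piece: it uses the trivial chain $L_p \ge L_1 \ge \bigl|\int_0^1 \Delta_{X_N^C}(t)\rd t\bigr|$ and then evaluates that single global integral \emph{exactly} via Lemma~\ref{le17}. The reason this is tractable for the specific $N_{\bsa}$ is a point your plan misses: since the binary digits of $N_{\bsa}$ are $N_i=1-a_i$, every product $a_iN_i=a_i(1-a_i)$ vanishes, so the shift digits $\eta_j$ collapse to the constant $a_m$ and the integral reduces to $(-1)^{a_m}\bigl(\tfrac{r}{2}-\tfrac12\sum_{k=2}^r 2^{-n_k}\sum_{i=k+1}^r 2^{n_i}\bigr)+O(1)$ with $r=l_0(m)+2$. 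The hypothesis $d_0(m)\ge 2$ then enters not as disjointness of intervals but as the bound $\sum_{i=k+1}^r 2^{n_i-n_k}\le\sum_{j\ge1}4^{-j}=1/3$, which caps the negative correction at $r/6$ and leaves $r/2-r/6=r/3$; this is also where the constant $1/3$ actually comes from, not from a sawtooth integral as you guess. Your localisation scheme would have to prove that $\Delta$ has a definite sign and size $\gtrsim 1$ on each $J_i$, a much stronger and quite possibly false statement (the global mean being large is compatible with many local behaviours), and you offer no mechanism for it.

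The second claim is also handled differently, and your proposed shortcut is doubtful. You suggest either symmetry of the argument under $0\leftrightarrow 1$ or a measure-preserving reflection of the sequence swapping $\bsa\leftrightarrow\bsone-\bsa$; neither is what works. Under $d_1(m)\ge 2$ the exponents $n_k$ of $N_{\bsa}$ (which sit at the \emph{zeros} of $\bsa$) may be adjacent, so the geometric-series bound fails for $N_{\bsa}$ itself. The paper instead proves that the quantity $S(N)=\tfrac{r}{2}-\tfrac12\sum_{k=2}^r 2^{-n_k}\sum_{i=k+1}^r 2^{n_i}=\tfrac12\sum_{\ell=0}^{n_1-1}\bigl\|N/2^{\ell+1}\bigr\|+O(1)$ changes only by $O(1)$ when the binary digits of $N$ below $2^{n_1}$ are complemented; this transfers the bound from the digit-complemented integer $N'$ (whose nonzero digits are separated by $d_1(m)\ge 2$) back to $N_{\bsa}$. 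That is a statement about the integer $N$, not a symmetry of digital $(0,1)$-sequences, and I see no clean sequence-level reflection that would substitute for it. To make your write-up work you would essentially have to rediscover Lemma~\ref{le17} and the digit-complementation identity; as it stands the plan does not constitute a proof.
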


\begin{corollary} \label{coro1}
The first $N$ elements of a NUT-sequence generated by a matrix of the form $C(\bsa)$ satisfy
	  \begin{equation} L_p(X_N^{C(\bsa)}) \ge c \log{N} \ \ \ \mbox{for infinitely many $N$} \label{baddiscr} \end{equation} 
		for some constant $c>0$ if $l_1(m)\geq c_1 m$ for some $c_1>0$ and $d_1(m)\geq 2$ for all $m\geq 2$ or
		if $l_0(m)\geq c_2 m$ for some $c_2>0$ and $d_0(m)\geq 2$ for all $m\geq 2$. %Both conditions are sufficient, but not necessary for~\eqref{baddiscr}.
\end{corollary}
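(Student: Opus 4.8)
The plan is to derive the corollary directly from Theorem~\ref{thmlbd2}, the only extra input being that the special indices $N_{\bsa}$ appearing there grow geometrically in $m$. So first I would record the elementary two-sided estimate for $N_{\bsa}=1+\sum_{i=1}^{m-1}2^{i}(1-a_i)+2^m$: each summand $2^{i}(1-a_i)$ lies between $0$ and $2^{i}$, hence
$$ 2^m < N_{\bsa} \le 1+\sum_{i=1}^{m-1}2^{i}+2^m = 2^{m+1}-1 < 2^{m+1}, $$
so that $m \le \frac{\log N_{\bsa}}{\log 2} < m+1$, equivalently $m \ge \frac{\log N_{\bsa}}{\log 2}-1$. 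In particular, for a fixed sequence $\bsa$, the map $m\mapsto N_{\bsa}$ is injective, because the values for distinct $m$ lie in the disjoint dyadic blocks $(2^m,2^{m+1})$; thus letting $m$ range over an infinite set produces infinitely many distinct indices $N$.

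Next, suppose $d_1(m)\ge 2$ for all $m\ge 2$ and $l_1(m)\ge c_1 m$ for some $c_1>0$. Then for every $m\ge 2$ we may invoke inequality \eqref{claim2} of Theorem~\ref{thmlbd2}, which gives, with the additive constant $A$ hidden in the $O(1)$ there (independent of $m$),
$$ L_p(X_{N_{\bsa}}^{C(\bsa)}) \ge \frac{1}{3}l_1(m)+A \ge \frac{c_1}{3}m+A \ge \frac{c_1}{3\log 2}\log N_{\bsa}-\frac{c_1}{3}+A. $$
Once $\log N_{\bsa}$ is large enough the two $O(1)$ terms on the right are absorbed, so the right-hand side is at least $\frac{c_1}{6\log 2}\log N_{\bsa}$ for all $m\ge m_0$; hence \eqref{baddiscr} holds with $c=\frac{c_1}{6\log 2}$ along the infinite family $(N_{\bsa})_{m\ge m_0}$. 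The second case, with $d_0(m)\ge 2$ for all $m\ge 2$ and $l_0(m)\ge c_2 m$, is handled identically using inequality \eqref{claim1} of Theorem~\ref{thmlbd2}, giving \eqref{baddiscr} with $c=\frac{c_2}{6\log 2}$.

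I do not expect any genuine obstacle: all the difficulty is packaged inside Theorem~\ref{thmlbd2}, and the corollary is a bookkeeping step that converts the pointwise bound ``$\gtrsim l_\ell(m)$ at $N_{\bsa}$'' into ``$\gtrsim \log N$ infinitely often'' by means of $m\asymp\log N_{\bsa}$ together with the linear-growth hypothesis $l_\ell(m)\gtrsim m$. The only subtlety worth a sentence is that the witnesses $N_{\bsa}$ are pairwise distinct for distinct $m$ — immediate from $N_{\bsa}\in(2^m,2^{m+1})$ — so the hypothesis ``for all $m\ge 2$'' really does deliver infinitely many indices $N$ (indeed ``for infinitely many $m$'' would already suffice).
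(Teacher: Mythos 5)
Your derivation is correct and is exactly the (implicit) argument the paper intends: the corollary is a direct consequence of Theorem~\ref{thmlbd2} via the bracketing $2^m<N_{\bsa}<2^{m+1}$, which gives $m\asymp\log N_{\bsa}$ and pairwise distinct witnesses for distinct $m$. The paper does not spell this out, but your bookkeeping, including the absorption of the $O(1)$ terms and the halving of the constant, is precisely what is needed and contains no gaps.
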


One example for a generator matrix satisfying the hypotheses of Corollary~\eqref{coro1} is
$$C(\bsa)=
\begin{pmatrix}
1 & 0 & 1  & 0 & 1 & \cdots\\
0 & 1 & 1  & 0 & 1 & \cdots\\
0 & 0 & 1  & 0 & 1 & \cdots\\ 
0 & 0 & 0  & 1 & 1 & \cdots \\
0 & 0 & 0  & 0 & 1 & \cdots\\
\vdots & \vdots & \vdots & \vdots & \vdots & \ddots
\end{pmatrix}.
$$

\section{The proofs}

The following auxiliary result will be the main tool of our proofs. 

\begin{lemma}\label{le17}
For every NUT digital sequence $X^C$ and every $N \in \NN$ of the form $N=2^{n_1}+ 2^{n_2}+\cdots + 2^{n_r}$ with $n_1>n_2> \ldots >n_r$ and $r \in \NN$ we have 
$$\int_0^1 \Delta_{X_N^C}(t) \rd t = \sum_{i=2}^r\sigma_{r,n_i+1}-\sum_{k=2}^r \sum_{j=n_k +1}^{n_{k-1}} \frac{\sigma_{r,j}}{2^j} \sum_{i=k}^r 2^{n_i}+O(1),$$ where the $\sigma_{r,j}$ are given by the following matrix-vector product over $\ZZ_2$:
$$\left(
\begin{array}{l}
\sigma_{r,n_r+1}\\
\vdots\\
\vdots\\
\vdots\\
\vdots\\
\sigma_{r,n_1+1}
\end{array}\right)=
\left(
\begin{array}{llll}
c_{n_r+1,n_r+1} &  \ldots  & \ldots & c_{n_r+1,n_1+1} \\
\multicolumn{4}{c}\dotfill\\
\multicolumn{4}{c}\dotfill\\
\multicolumn{4}{c}\dotfill\\
\multicolumn{4}{c}\dotfill\\
0 \ldots &  \ldots  & 0 & c_{n_1+1,n_1+1} \\
\end{array}
\right)
\left(
\begin{array}{l}
0 \\
\vdots \\
0 \\
\hline
1 \\
0 \\
\vdots\\
0 \\
\hline
\vdots\\
\hline
1  \\
0 \\
\vdots\\
0  \\
\hline
1 
\end{array}\right),
$$  
where the digits 1 in the latter vector are placed at positions $n_l-n_r+1$ for $l\in\{1,\dots,r-1\}$.
\end{lemma}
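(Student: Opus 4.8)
The plan is to compute the integral $\int_0^1 \Delta_{X_N^C}(t)\,\mathrm{d}t$ directly using the known formula for the discrepancy function of a digital sequence in terms of its Walsh--Dirichlet-type coefficients, and then to track which terms survive the integration and are of order larger than a constant. First I would recall (or re-derive) the standard representation of the discrepancy function of a digital $(0,1)$-sequence. For a digital sequence over $\ZZ_2$, the first $N$ points with $N = 2^{n_1} + \cdots + 2^{n_r}$ can be split into $r$ blocks, the $k$-th block consisting of the $2^{n_k}$ points with indices in $[2^{n_1}+\cdots+2^{n_{k-1}}, 2^{n_1}+\cdots+2^{n_k})$. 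Each such block is a shifted digital net generated by the leading columns of $C$, and its contribution to the discrepancy function is governed by the action of $C$ on the coordinate directions indexed around $n_k+1$. The quantities $\sigma_{r,j}$ arising in the statement are exactly the images, under the relevant finite upper-triangular sub-block of $C$ (rows and columns $n_r+1$ through $n_1+1$), of the indicator vector whose ones sit at the positions corresponding to the ``carries'' between consecutive blocks — i.e. at positions $n_\ell - n_r + 1$.

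Next I would organize the integration block by block. The integral of $\bsone_{[0,t)}(x_n)$ over $t\in[0,1]$ is $1-x_n$, so $\int_0^1 \Delta_{X_N^C}(t)\,\mathrm{d}t = \sum_{n=0}^{N-1}(1-x_n) - N/2 = N/2 - \sum_{n=0}^{N-1} x_n$. Thus the whole computation reduces to evaluating $\sum_{n=0}^{N-1} x_n$ for a digital NUT sequence, which is a clean algebraic quantity: writing $x_n = \sum_{j\ge 1} y_j^{(n)} 2^{-j}$ with $(y_j^{(n)})_j = C\vec{n}$, one gets $\sum_{n=0}^{N-1} x_n = \sum_{j\ge 1} 2^{-j}\sum_{n=0}^{N-1} y_j^{(n)}$. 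The inner sum counts, among $n=0,\dots,N-1$, how many have $j$-th output digit equal to $1$; by linearity over $\ZZ_2$ this is $\#\{n < N : \langle c_j, \vec n\rangle = 1\}$ where $c_j$ is the $j$-th row of $C$. Splitting the range $[0,N)$ according to the binary digits of $N$ and using that $C$ is NUT (so $c_{j,i}=0$ for $i<j$ and $c_{j,j}=1$), each sub-range is an arithmetic-progression-like set on which the count can be evaluated exactly; the leading (non-constant) part of the answer is precisely the double sum in the statement, with the matrix-vector product over $\ZZ_2$ encoding how the digit $c_{j,i}$ interacts with the higher bits $n_i$ of $N$. All lower-order fluctuations — boundary effects, the finitely many rows with index $\le n_r$, rounding — get absorbed into $O(1)$, uniformly in $N$ (this is where we use that there are only $r$ blocks and that each correction per block is bounded).

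The main obstacle, I expect, is the bookkeeping that identifies the surviving sum with exactly the stated expression, in particular pinning down the index ranges $j \in [n_k+1, n_{k-1}]$ and the inner sums $\sum_{i=k}^r 2^{n_i}$, and verifying that the ``carry'' vector has its ones at positions $n_\ell - n_r + 1$ rather than at some shifted index. Concretely, when one passes from block $k$ to block $k-1$ the points acquire an extra high-order bit, and the digital images of the block's points get shifted by $C$ applied to the unit vector in that direction; composing these shifts across all blocks $\ge k$ produces a vector supported at the positions $n_\ell - n_r + 1$, $\ell < k$, and the $\sigma_{r,j}$ are the $\ZZ_2$-coordinates of $C$ acting on it. Making this precise requires care with indexing conventions (which digit is ``digit $1$'', whether rows/columns start at $1$, the direction of the shift), but it is a finite-dimensional linear-algebra computation. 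Once the exact value of $\sum_{n<N} x_n$ is in hand, rearranging into the claimed form and collecting the error terms into $O(1)$ is routine, so I would allocate most of the effort to the block decomposition and the verification of the position of the ones in the carry vector.
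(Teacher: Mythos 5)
Your proposal follows essentially the same route as the paper: split the first $N$ points into $r$ dyadic blocks according to the binary digits of $N$, use $\int_0^1\Delta_{X_N^C}(t)\rd t = N/2-\sum_{n<N}x_n$ (up to the $O(1)$ bookkeeping), recognize each block as a digitally shifted digital net whose shift is $C$ applied to the high-order ``carry'' bits of the block index, and reduce everything to the $\ZZ_2$ linear algebra that determines the $\sigma_{r,j}$ and the positions $n_\ell-n_r+1$ of the ones. The only cosmetic difference is that you evaluate $\sum_{n<N}x_n$ by counting, for each output digit $j$, the points with $j$-th digit equal to $1$, whereas the paper identifies each block as an equidistant set $\{k2^{-n_i}+\delta_i\}$ and sums it directly; the two computations are equivalent.
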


\begin{proof}
Let $X^C=(x_n)_{n \ge 0}$ be the NUT digital sequence which is generated by the $\NN\times\NN$ matrix $C$. Let $N \in \NN$ be of the form $$N=2^{n_1}+2^{n_2}+\cdots+2^{n_r},$$ where $n_1 > \ldots > n_r$. %Note that $r=s_2(N)$ is the binary sum-of-digits function of $N$. 
For $i=1,\ldots,r$ consider $$\calP_i=\{x_{2^{n_1}+\cdots+2^{n_{i-1}}},\ldots,x_{2^{n_1}+\cdots+2^{n_{i-1}}+2^{n_i}-1}\},$$ where for $i=1$ we define $2^{n_1}+\cdots+2^{n_{i-1}}=0$. Every 
\begin{equation}\label{nrangePi}
n \in \{2^{n_1}+\cdots+2^{n_{i-1}},\ldots,2^{n_1}+\cdots+2^{n_{i-1}}+2^{n_i}-1\}
\end{equation}
can be written as $$n=2^{n_1}+\cdots+2^{n_{i-1}}+a=2^{n_{i-1}} l_i+a,$$ where $a \in \{0,1,\ldots,2^{n_i}-1\}$ and $$l_i=\left\{
\begin{array}{ll}
0 & \mbox{ if $i=1$,} \\
1 & \mbox{ if $i=2$,} \\
1+2^{n_{i-2}-n_{i-1}}+\cdots +2^{n_1-n_{i-1}} & \mbox{ if $i> 2$.}
\end{array}
\right.$$

For fixed $i=1,\ldots,r$ we decompose the matrix $C$ in the form 
$$
 \left( \begin{array}{ccc}
           & \vline &  \\
  C^{(n_i \times n_i)} & \vline & D^{(n_i \times \mathbb{N})} \\
           & \vline &   \\ \hline
    & \vline & \\
 0^{(\mathbb{N} \times n_i)}  & \vline &  F^{(\mathbb{N} \times \mathbb{N})} \\
   &   \vline       &
\end{array} \right) \in \mathbb{Z}_2^{\mathbb{N} \times \mathbb{N}},
$$  where $C^{(n_i \times n_i)}$ is the left upper $n_i\times n_i$ sub-matrix of $C$. To $n$ in~\eqref{nrangePi} we associate $$\vec{n}=(a_0,a_1,\ldots,a_{n_i-1},\ell_0,\ell_1,\ell_2,\ldots)^{\top}=:{\vec{a} \choose \vec{l}_i},$$ where $a_0,\ldots,a_{n_i-1}$ are the binary digits of $a$ and $\ell_0,\ell_1,\ell_2,\ldots$ are the binary digits of $l_i$. With this notation for $n$ in the range \eqref{nrangePi} we have
$$C \vec{n}=\left( \begin{array}{c} C^{(n_i \times n_i)}  \vec{a} \\ 0 \\ 0 \\ \vdots \end{array} \right)  + \left( \begin{array}{c}
             \\
  D^{(n_i \times \mathbb{N})}  \\
          \\ \hline  \\
 F^{(\mathbb{N} \times \mathbb{N})}  \\
   \end{array} \right) \vec{l}_i.
	$$ 
This shows that the point set $\calP_i$ is a digitally shifted digital net with generating matrix $C^{(n_i \times n_i)}$ and with digital shift vector
\begin{equation}\label{dig_shift}
\vec{\sigma}_{i}=(\sigma_{i,1},\sigma_{i,2},\ldots)^{\top}:=\left( \begin{array}{c}
             \\
  D^{(n_i \times \mathbb{N})}  \\
          \\ \hline  \\
 F^{(\mathbb{N} \times \mathbb{N})}  \\
   \end{array} \right) \vec{l}_i .
\end{equation}
Since $F^{(\NN \times \NN)}$ is also a NUT matrix we find that the shift is of the form $$\vec{\sigma}_i=(\sigma_{i,1},\sigma_{i,2},\ldots,\sigma_{i,n_1+1},0,0,\ldots)^{\top}\in \ZZ_2^{\infty}$$

Note that the matrix $C^{(n_i \times n_i)}$ has full rank, as $X^C$ is a NUT digital sequence. Hence the shifted digital net $\calP_i$ can be written as the set of points
$$  \calP_i=\left\{\frac{b_1}{2}+\dots+\frac{b_{n_i}}{2^{n_i}}+\sum_{j=1}^{\infty}\frac{\sigma_{i,n_i+j}}{2^{n_i+j}}:a_0,\dots,a_{n_i-1} \in\{0,1\}\right\}, $$
where $b_k=c_{k,1}a_0\oplus \dots \oplus c_{k,n_i}a_{n_i-1}\oplus \sigma_{i,k}$ for $1\leq k\leq n_i$. Here and in the following $\oplus$ denotes addition in $\ZZ_2$.

We emphasize that $\sigma_{i,1},\dots,\sigma_{i,n_i}$ do not depend on the $a_i$'s, whereas the components $\sigma_{i,j}$ for $j\geq n_i+1$ may do so. Therefore we can also write
$$ \calP_i=\left\{\frac{k_i}{2^{n_i}}+\delta_{i}:k_i \in\{0,1,\dots,2^{n_i}-1\}\right\}, $$
where $\delta_{i}=\sum_{j=1}^{n_1-n_i+1}\frac{\sigma_{i,n_i+j}}{2^{n_i+j}}$ for $i>1$ and $\delta_1=0$.

We have the following decomposition of $X_N^C$: $$X_N^C=\bigcup_{i=1}^r \calP_i.$$

Therefore and from the fact that
$$ \int_0^1 \Delta_{\calP_i}(t)\, \rd t=\sum_{z \in \calP_i}\left(\frac12-z\right) $$
 we obtain
\begin{eqnarray*}
\int_0^1 \Delta_{X_N^C}(t)\, \rd t & = & \sum_{i=1}^r \int_0^1 \Delta_{\calP_i}(t)\, \rd t
 =  \sum_{i=1}^r \sum_{\ell=0}^{2^{n_i}-1}\left(\frac12-\left(\frac{\ell}{2^{n_i}}+\delta_i\right)\right)\\
& = & \sum_{i=1}^r \left(\frac12-2^{n_i}\delta_i\right) 
= \frac{r}{2}- \sum_{i=1}^r2^{n_i}\delta_i, 
\end{eqnarray*}
 where
 $$2^{n_i} \delta_i=\left\{ 
\begin{array}{ll}
0 & \mbox{ if } i=1,\\
\frac{\sigma_{i,n_i+1}}{2}+\frac{\sigma_{i,n_i+2}}{2^2}+\frac{\sigma_{i,n_i+3}}{2^3}+\cdots +\frac{\sigma_{i,n_1+1}}{2^{n_1-n_i+1}}& \mbox{ if } i>1
\end{array}\right.
$$ and, for $k \ge 1$, 
\begin{eqnarray*}
\sigma_{i,n_i+k} & = & \bigoplus_{j=0}^{n_1-n_i-k+1} c_{n_i+k,n_i+k+j} a_{n_i+k-1+j}\\ & = & c_{n_i+k,n_i+k} a_{n_i+k-1}+ c_{n_i+k,n_i+k+1} a_{n_i+k}+ \cdots + c_{n_i+k,n_1+1} a_{n_1}\pmod{2},
\end{eqnarray*}
where $a_{n_\ell}=1$ for $\ell=1,\ldots,i-1$ and all other $a_r$'s are zero. Note that $\sigma_{i,n_i+k}\in \ZZ_2$.

%We remark that the lower bound $L_p(X_N^C) \ge  \left|\frac{r}{2}- \sum_{i=1}^r2^{n_i}\delta_i \right|$ holds for arbitrary digital NUT sequences.

We have 
\begin{eqnarray*}
\sum_{i=1}^r2^{n_i}\delta_i & = & \sum_{i=2}^r \sum_{j = 1}^{n_1-n_i+1}\frac{\sigma_{i,n_i+j}}{2^j}.
\end{eqnarray*}
Observe that
$$\left(
\begin{array}{l}
\sigma_{i,n_i+1}\\
\ldots \\
\sigma_{i,n_{i-1}}\\
\sigma_{i,n_{i-1}+1}\\
\ldots\\
\ldots\\
\ldots\\
\ldots\\
\sigma_{i,n_1+1}
\end{array}\right)=
\left(
\begin{array}{llllll}
c_{n_i+1,n_i+1}  & \ldots & c_{n_i+1,n_{i-1}+1} & \ldots & \ldots  & c_{n_i+1,n_1+1} \\
\multicolumn{6}{c}\dotfill\\
\multicolumn{6}{c}\dotfill\\
0 \ldots &  \ldots & c_{n_{i-1}+1,n_{i-1}+1} & \ldots & \ldots   & c_{n_{i-1}+1,n_{1}+1} \\
\multicolumn{6}{c}\dotfill\\
\multicolumn{6}{c}\dotfill\\
\multicolumn{6}{c}\dotfill\\
\multicolumn{6}{c}\dotfill\\
0 \ldots & \multicolumn{4}{c}\dotfill   & c_{n_1+1,n_1+1} \\
\end{array}
\right)
\left(
\begin{array}{l}
0\\
\vdots\\
0\\
1\\
0\\
\vdots \\
0\\
\vdots\\
1
\end{array}\right),
$$ 
and
$$\left(
\begin{array}{l}
\sigma_{i,n_{i-1}+1}\\
\ldots\\
\ldots\\
\ldots\\
\ldots\\
\sigma_{i,n_1+1}
\end{array}\right)=
\left(
\begin{array}{llll}
c_{n_{i-1}+1,n_{i-1}+1} &  \ldots  & \ldots & c_{n_{i-1}+1,n_1+1} \\
\multicolumn{4}{c}\dotfill\\
\multicolumn{4}{c}\dotfill\\
\multicolumn{4}{c}\dotfill\\
\multicolumn{4}{c}\dotfill\\
0 \ldots &  \ldots  & 0 & c_{n_1+1,n_1+1} \\
\end{array}
\right)
\left(
\begin{array}{l}
1\\
0\\
\vdots \\
0\\
\vdots\\
1
\end{array}\right).
$$ 
Hence 
$$\left(
\begin{array}{l}
\sigma_{i-1,n_{i-1}+1}\\
\sigma_{i-1,n_{i-1}+2}\\
\ldots\\
\ldots\\
\ldots\\
\sigma_{i-1,n_1+1}
\end{array}\right)=
\left(
\begin{array}{llll}
c_{n_{i-1}+1,n_{i-1}+1} &  \ldots  & \ldots & c_{n_{i-1}+1,n_1+1} \\
\multicolumn{4}{c}\dotfill\\
\multicolumn{4}{c}\dotfill\\
\multicolumn{4}{c}\dotfill\\
\multicolumn{4}{c}\dotfill\\
0 \ldots &  \ldots  & 0 & c_{n_1+1,n_1+1} \\
\end{array}
\right)
\left(
\begin{array}{l}
0\\
0\\
\vdots \\
0\\
\vdots\\
1
\end{array}\right)=
\left(
\begin{array}{l}
\sigma_{i,n_{i-1}+1} \oplus 1\\
\sigma_{i,n_{i-1}+2}\\
\ldots\\
\ldots\\
\ldots\\
\sigma_{i,n_1+1}
\end{array}\right).
$$ 
This shows that we have $$\sigma_{i,k} =\left\{ 
\begin{array}{ll}
\sigma_{i-1,k} & \mbox{for $k=n_{i-1}+2,n_{i-1}+3,\ldots,n_1+1$,}\\
\sigma_{i-1,k} \oplus 1 & \mbox{for $k=n_{i-1}+1$.}\\
\end{array}\right.$$
From this we obtain for all $i\in\{2,3,\ldots,r\}$ that 
$$\sigma_{i,n_i+j} =\left\{ 
\begin{array}{ll}
\sigma_{r,n_i+j} & \mbox{for $j=2,3,\ldots,n_1-n_i+1$,}\\
\sigma_{r,n_i+j} \oplus 1 =1- \sigma_{r,n_i+j}& \mbox{for $j=1$.}\\
\end{array}\right.$$

Hence
\begin{eqnarray*}
\sum_{i=1}^r2^{n_i}\delta_i & = & \sum_{i=2}^r \frac{1-\sigma_{r,n_i+1}}{2}+ \sum_{i=2}^r \sum_{j = 2}^{n_1-n_i+1}\frac{\sigma_{r,n_i+j}}{2^j} %\\ & = & 
= \frac{r-1}{2}- \sum_{i=2}^r\sigma_{r,n_i+1}+\sum_{i=2}^r \sum_{j = 1}^{n_1-n_i+1}\frac{\sigma_{r,n_i+j}}{2^j}.
\end{eqnarray*}
For the very last double sum we have
\begin{eqnarray*}
\sum_{i=2}^r \sum_{j = 1}^{n_1-n_i+1} \frac{\sigma_{r,n_i+j}}{2^j} & = & \sum_{i=2}^r \sum_{j=n_i+1}^{n_1+1} \frac{\sigma_{r,j}}{2^{j-n_i}}= \sum_{j=n_r +1}^{n_1+1} \frac{\sigma_{r,j}}{2^j} \sum_{\substack{i=2 \\ n_i \le j-1}}^r 2^{n_i}\\
& = & \sum_{k=2}^r \sum_{j=n_k +1}^{n_{k-1}} \frac{\sigma_{r,j}}{2^j} \sum_{\substack{i=2 \\ n_i \le j-1}}^r 2^{n_i} + \frac{\sigma_{r,n_1+1}}{2^{n_1+1}} \sum_{\substack{i=2 \\ n_i \le n_1}}^r 2^{n_i}\\
& = & \sum_{k=2}^r \sum_{j=n_k +1}^{n_{k-1}} \frac{\sigma_{r,j}}{2^j} \sum_{i=k}^r 2^{n_i}+ \frac{\sigma_{r,n_1+1}}{2^{n_1+1}} (N-2^{n_1}).
\end{eqnarray*}
Hence
$$\sum_{i=1}^r2^{n_i}\delta_i = \frac{r-1}{2}- \sum_{i=2}^r\sigma_{r,n_i+1}+\sum_{k=2}^r \sum_{j=n_k +1}^{n_{k-1}} \frac{\sigma_{r,j}}{2^j} \sum_{i=k}^r 2^{n_i}+ \frac{\sigma_{r,n_1+1}}{2^{n_1+1}} (N-2^{n_1}).$$ This gives
$$\int_0^1 \Delta_{X_N^C}(t) \rd t = \sum_{i=2}^r\sigma_{r,n_i+1}-\sum_{k=2}^r \sum_{j=n_k +1}^{n_{k-1}} \frac{\sigma_{r,j}}{2^j} \sum_{i=k}^r 2^{n_i}+O(1).$$
\end{proof}

Now we give the proof of Theorem~\ref{thmlbd}.\\

\begin{proof}
In order to simplify the notation we will write $C$ instead of $C(\alpha)$ in the following. For every $N$ and $p$ we have
\begin{equation}\label{lpbdint}
L_p(X_N^C) \ge L_1(X_N^C)= \|\Delta_{X_N^C}\|_{L_1([0,1))} \ge \left|\int_0^1 \Delta_{X_N^C}(t) \rd t\right|.
\end{equation}

Now choose $n_i=2 \alpha (r-i)$ for $i \in\{1,2,\ldots,r\}$, i.e. $$N=\sum_{i=1}^r 2^{2 \alpha (r-i)}=\frac{2^{2 \alpha r}-1}{2^{2 \alpha}-1}\ \ \mbox{ and hence }\ \ r=\frac{\log((2^{2 \alpha}-1)N +1)}{2 \alpha \log 2}.$$ We have $$\sum_{i=k}^r 2^{n_i}=\sum_{i=k}^r 2^{2 \alpha (r-i)}=\frac{2^{2 \alpha(r-k+1)}-1}{2^{2 \alpha}-1}.$$ Therefore
\begin{eqnarray*}
\sum_{k=2}^r \sum_{j=n_k +1}^{n_{k-1}} \frac{\sigma_{r,j}}{2^j} \sum_{i=k}^r 2^{n_i} & = & \sum_{k=2}^r \sum_{j=2\alpha(r-k) +1}^{2\alpha(r-k)+2\alpha} \frac{\sigma_{r,j}}{2^j} \frac{2^{2 \alpha(r-k+1)}-1}{2^{2 \alpha}-1}\\
& = & \frac{1}{2^{2 \alpha}-1} \sum_{k=2}^r \sum_{j=1}^{2\alpha} \frac{\sigma_{r,2 \alpha(r-k)+j}}{2^{2 \alpha(r-k)+j}} (2^{2 \alpha(r-k+1)}-1)\\
& = & \frac{2^{2 \alpha}}{2^{2 \alpha}-1} \sum_{k=2}^r \sum_{j=1}^{2\alpha} \frac{\sigma_{r,2 \alpha(r-k)+j}}{2^j}+O(1)\\
& = & \frac{2^{2 \alpha}}{2^{2 \alpha}-1} \sum_{\ell=0}^{r-2} \sum_{j=1}^{2\alpha} \frac{\sigma_{r,2 \alpha \ell+j}}{2^j}+O(1).
\end{eqnarray*}
Hence, using Lemma~\ref{le17}, we get 
$$\int_0^1 \Delta_{X_N^C}(t) \rd t = \sum_{\ell=0}^{r-2}\sigma_{r,2 \ell \alpha+1}-\frac{2^{2 \alpha}}{2^{2 \alpha}-1} \sum_{\ell=0}^{r-2} \sum_{j=1}^{2\alpha} \frac{\sigma_{r,2\alpha \ell+j}}{2^j}+O(1).$$
Now we have to determine the numbers $\sigma_{r,j}$. Observe that
$$\left(
\begin{array}{l}
\sigma_{r,1}\\
\vdots\\
\vdots\\
\vdots\\
\vdots\\
\sigma_{r,(2r-2)\alpha+1}
\end{array}\right)=
\left(
\begin{array}{llll}
c_{1,1} &  \ldots  & \ldots & c_{1,(2r-2)\alpha+1} \\
\multicolumn{4}{c}\dotfill\\
\multicolumn{4}{c}\dotfill\\
\multicolumn{4}{c}\dotfill\\
\multicolumn{4}{c}\dotfill\\

0 \ldots &  \ldots  & 0 & c_{(2r-2)\alpha+1,(2r-2)\alpha+1} \\
\end{array}
\right)
\left(
\begin{array}{l}
0 \\
\vdots \\
0 \\
\hline
1 \\
0 \\
\vdots\\
0 \\
\hline
\vdots\\
\hline
1  \\
0 \\
\vdots\\
0 \\
\hline
1 
\end{array}\right),
$$ 
where the digits 1 in the latter vector are placed at positions $l\alpha+1$ for $l\in\{2,\dots,2r-2\}$.
From the structure of the matrix we find that
\begin{eqnarray*}
\sigma_{r,1}=\ldots =\sigma_{r,\alpha+1} & = & 0\\
\sigma_{r,\alpha+2}=\ldots =\sigma_{r,2\alpha+1} & = & 1\\
\sigma_{r,2\alpha+2}=\ldots =\sigma_{r,3\alpha+1} & = & 0\\
\sigma_{r,3\alpha+2}=\ldots =\sigma_{r,4\alpha+1} & = & 1\\
\sigma_{r,4\alpha+2}=\ldots =\sigma_{r,5\alpha+1} & = & 0\\
\ldots \\
\sigma_{r,(2r-3)\alpha+2}=\ldots =\sigma_{r,(2r-2)\alpha+1} & = & 1
\end{eqnarray*}
and therefore
\begin{eqnarray*}
\sum_{\ell=0}^{r-2} \sum_{j=1}^{2\alpha} \frac{\sigma_{r,2\alpha \ell+j}}{2^j} & = & \frac{1}{2^{\alpha+2}}+\cdots+\frac{1}{2^{2 \alpha}}+\sum_{\ell=1}^{r-2}\left(\frac{1}{2}+\frac{1}{2^{\alpha+2}}+\cdots+\frac{1}{2^{2 \alpha}}\right)\\
& = & \frac{r-2}{2}+(r-1)  \frac{1}{2^{\alpha+2}} \frac{1-(1/2)^{\alpha-1}}{1/2}.
\end{eqnarray*}
Furthermore $$\sum_{\ell=0}^{r-2} \sigma_{r,2 \alpha \ell+1}=0+1+1+\cdots+1=r-2.$$ Putting all together we obtain
\begin{eqnarray*}
\int_0^1 \Delta_{X_N^C}(t) \rd t & = & r-2-\frac{2^{2 \alpha}}{2^{2 \alpha}-1} \left( \frac{r-2}{2}+(r-1)  \frac{1}{2^{\alpha+2}} \frac{1-(1/2)^{\alpha-1}}{1/2}\right)+O(1)\\
& = & r \frac{2^{\alpha-1}}{2^{2\alpha}-1}+O(1).
\end{eqnarray*}
Hence, using \eqref{lpbdint}, we get $$L_p(X_N^C)\ge \left|\int_0^1 \Delta_{X_N^C}(t) \rd t\right| = \frac{2^{\alpha-1}}{2^{2\alpha}-1}\frac{\log((2^{2 \alpha}-1)N +1)}{2 \alpha \log 2}+O(1).$$
\end{proof}

In the following, we give the proof of Theorem~\ref{thmlbd2}.\\

\begin{proof}
Note that in the case $n_r=0$ the numbers $\sigma_{r,j}$ appearing in Lemma~\ref{le17} can also be understood in the following way: Let $N=2^{n_1}+\sum_{i=0}^{n_1-1}N_i2^i=\sum_{i=1}^r 2^{n_i}$ with $n_1=m \in\NN$,
	$N_i\in\ZZ_2$ for $i\in\{0,\dots,n_1-1\}$ and $r=s_2(N)$.  Let $$\eta_j:=c_{j,j+1}N_j\oplus \dots\oplus c_{j,n_1}N_{n_1-1}\oplus c_{j,n_1+1}.$$
	Then we have for $j\in\{1,\dots,n_1+1\}$
	$$ \sigma_{r,j}=\begin{cases}
	            1 & \text{if $j=n_1+1$}, \\
							\eta_j\oplus 1 & \text{if $j=n_k+1$ for some $k\in\{2,\dots,r\}$}, \\
							\eta_j & \text{otherwise.}
	                \end{cases}$$
Now consider a matrix of the form $C(\bsa)$ and set $N_{\bsa}=2^{n_1}+\sum_{i=1}^{n_1-1}(1-a_i)2^i+1=\sum_{i=1}^r 2^{n_i}$, where $r=l_0(m)+2$. Then we have
$$ \eta_j=a_{j}N_j\oplus \dots \oplus a_{n_1-1}N_{n_1-1}\oplus a_{n_1}=a_{n_1}. $$ 
We observe that for $N_{\bsa}$ and $j\in\{1,\dots,n_1\}$ we have $\sigma_{r,j}=a_{n_1}\oplus 1$ if and only if
 $j=n_k+1$ for some $k\in\{2,\dots,r\}$, and $\sigma_{r,j}=a_{n_1}$ otherwise. Hence with Lemma 1 we find 
  $$\int_0^1 \Delta_{X_{N_{\bsa}}^{C(\bsa)}}(t) \rd t = (-1)^{a_{n_1}}\left(\frac{r}{2}-\frac12 \sum_{k=2}^r \frac{1}{2^{n_k}}\sum_{i=k+1}^r 2^{n_i}\right)+O(1).$$
	The fact that $d_0(m)\geq 2$ implies $n_i-n_k \leq 2(k-i)$ and further
	$$ \left|\int_0^1 \Delta_{X_{N_{\bsa}}^{C(\bsa)}}(t) \rd t \right| \geq \frac{r}{2}-\frac12 \sum_{k=2}^r \sum_{i=k+1}^r 2^{2(k-i)}+O(1)=\frac{r}{3}+O(1). $$
	This completes the proof of the first claim~\eqref{claim1}. To derive~\eqref{claim2} from~\eqref{claim1}, we show that changing the tuple $\bsa$ which defines the matrix to $\tilde{\bsa}=(1-a_i)_{i\geq 1}$ does not change the integral of $\Delta_{X_{N_{\bsa}}^{C(\bsa)}}$ much. We use the following argument: Let for $2^{n_1}\leq N \leq 2^{n_1+1}-1$ with $N=2^{n_1}+\sum_{i=0}^{n_1-1}N_i2^i=\sum_{i=1}^r 2^{n_i}$ 
	  $$ S(N):=\frac{r}{2}-\frac12 \sum_{k=2}^r \frac{1}{2^{n_k}}\sum_{i=k+1}^r 2^{n_i}. $$
		It is not hard to show that $S(N)=\frac12 \sum_{\ell=0}^{n_1-1}\left\|\frac{N}{2^{\ell+1}}\right\|+O(1),$ where $\|x\|$ denotes the distance of a real number $x$ to its nearest integer. For $N$ as defined above we define the integer $N':=2^{n_1}+\sum_{i=0}^{n_1-1}(1-N_i)2^i$
		and prove $S(N')=S(N)+O(1)$. This is the case, since
		\begin{align*}
		   S(N)-S(N')=&\sum_{\ell=0}^{m-1}\left\{\left\|\frac{N_r}{2}+\dots+\frac{N_0}{2^{r+1}}\right\|-\left\|\frac{1-N_r}{2}+\dots+\frac{1-N_0}{2^{r+1}}\right\|\right\}+O(1) \\
			   =& \sum_{\substack{\ell=0 \\ N_r=0}}^{m-1}(-2^{-r-1})+\sum_{\substack{\ell=0 \\ N_r=1}}^{m-1}2^{-r-1}+O(1)=\sum_{\ell=0}^{m-1}(2N_r-1)2^{-r-1}+O(1)
		\end{align*}
		and therefore $$ |S(N)-S(N')|\leq \sum_{r=0}^{m-1}2^{-r-1}+O(1)=O(1). $$
		This implies inequality~\eqref{claim2}.
\end{proof}

%%%%%%%%%%%%%%%%%%%%%%%%%%%%%%%%%%%%%%%%%%%%%%%%%%%%%%%%%%%%%%%%%%%%%%%%%%%%%%%%%%%%%%%%%%%
%%% The bibliography
%
% BibTeX users please use
%\bibliographystyle{spmpsci}
%\bibliography{mybibfile}
% and then copy paste the contents of the .bbl file here for the final version.
%
% E.g.:

\noindent{\bf Author's Addresses:}

\noindent Ralph Kritzinger and Friedrich Pillichshammer, Institut f\"{u}r Finanzmathematik, Johannes Kepler Universit\"{a}t Linz, Altenbergerstra{\ss}e 69, A-4040 Linz, Austria. Email: ralph.kritzinger(at)jku.at, friedrich.pillichshammer(at)jku.at
\end{document}